\newcommand{\B}{\mathcal{B}}
\newcommand{\Z}{\mathbb{Z}}
\newcommand{\T}{\mathbb{T}}
\renewcommand{\P}{\mathcal{P}}
\newcommand{\Q}{\mathcal{Q}}
\newcommand{\U}{\mathcal{U}}
\renewcommand{\max}{\mathrm{max}}
\newcommand{\R}{\mathbb{R}}
\newcommand{\dom}{\mathrm{dom}}
\newcommand{\sd}{\mathrm{sd}}
\newcommand{\N}{\mathbb{N}}
\newcommand{\ind}{\mathrm{ind}}
\newtheorem{theorem}{Theorem}
\newtheorem{lemma}[theorem]{Lemma}
\newtheorem{corollary}[theorem]{Corollary}
\newtheorem*{claim}{Claim}
\theoremstyle{definition}
\newtheorem{definition}[theorem]{Definition}
\author{Marcin Sabok}\thanks{This research was supported by
  the MNiSW (Polish Ministry of Science and Higher
  Education) grant no N N201 418939 and by the Foundation
  for Polish Science}
\title{Extreme amenability of abelian $L_0$ groups}
\address{Marcin Sabok, University of Illinois at
  Urbana--Champaign, 1409 W. Green Street, Urbana, IL 61801,
  USA}
\address{Instytut Matematyczny Uniwersytetu
  Wroc\l awskiego, pl.  Grunwaldzki $2\slash 4$, $50$-$384$
  Wroc\l aw, Poland}
\address{Instytut Matematyczny Polskiej Akademii Nauk,
  ul. \'Sniadeckich $8$, $00$-$956$ Warszawa, Poland}
\email{sabok@math.uni.wroc.pl}
\subjclass[2010]{22F05, 05C15, 46A16, 54H25, 55M20, 43A07}
\keywords{extremely amenable groups, submeasures, abelian
  $L_0$ groups, chromatic numbers}
\begin{document}

\maketitle

\begin{abstract}
  We show that for any abelian topological group $G$ and
  arbitrary diffused submeasure $\mu$, every continuous
  action of $L_0(\mu,G)$ on a compact space has a fixed
  point. This generalizes earlier results of Herer and
  Christensen, Glasner, Furstenberg and Weiss, and Farah and
  Solecki. This also answers a question posed by Farah and
  Solecki. In particular, it implies that if $H$ is of the
  form $L_0(\mu,\R)$, then $H$ is extremely amenable if and
  only if $H$ has no nontrivial characters, which gives an
  evidence for an affirmative answer to a question of
  Pestov. The proof is based on estimates of chromatic
  numbers of certain graphs on $\mathbb{Z}^n$. It uses tools
  from algebraic topology and builds on the work of Farah
  and Solecki.
\end{abstract}

\section{Introduction}

A topological group is called \textit{extremely amenable} if
every its continuous action on a compact space has a fixed
point. This terminology comes from a characterization of
amenable groups saying that a (locally compact) group is
amenable if and only if every its continuous action on a
compact convext set in a locally convex vector topological
space has a fixed point. No nontrivial locally compact group
is extremely amenable since locally compact groups admit
free actions on compact spaces, by a theorem of
Veech. Still, however, many non locally compact groups prove
to be extremely amenable. These groups have received
considerable attention recently (see \cite{Pestov}) and the
following groups were proved to be extremely amenable: the
group $\textrm{Homeo}_+([0,1])$ of order-preserving
homeomorphisms of the unit interval (Pestov,
\cite{Pestov:Free.actions}), the group $U(\ell^2)$ of
unitary transformations of $\ell^2$ (Gromov and Milman,
\cite{Gromov.Milman}), the group
$\textrm{Aut}([0,1],\lambda)$ of measure-preserving
automorphisms of the Lebesgue space (Giordano and Pestov,
\cite{Giordano.Pestov}), the group
$\textrm{Iso}(\mathbb{U})$ of isometries of the Urysohn
space (Pestov, \cite{Pestov:Ramsey}) and many groups of the
form $\textrm{Aut}(M)$ for a large class of countable
structures $M$ (Kechris, Pestov and Todorcevic,
\cite{Kechris.Pestov.Todorcevic}). The methods of proofs in
the above cases vary from the concentration of measure
phenomena to structural Ramsey theory. On the other hand,
the earliest known examples of extremely amenable groups
were of the form $L_0(\mu,G)$ for a topological group $G$
and a submeasure $\mu$, see \cite{Herer.Christensen,
  Glasner}. 

Let us explain the notion of a submeasure and an $L_0$
group. Given an algebra $\B$ of subsets of a set $X$, we say
that a function $\mu:\B\rightarrow[0,\infty)$ is a
\textit{submeasure} if $\mu(U)\leq\mu(V)$ whenever
$U\subseteq V\subseteq X$ and $\mu(U\cup
V)\leq\mu(U)+\mu(V)$ for all $U,V\subseteq X$. A submeasure
$\mu$ is called a \textit{measure} if $\mu(U\cup
V)=\mu(U)+\mu(V)$ for disjoint $U,V\subseteq X$. A
submeasure $\mu$ is called \textit{diffused} if for every
$\varepsilon>0$ there is a finite cover $X=\bigcup_{i\leq
  n}X_i$ with $X_i\in\B$ and $\mu(X_i)\leq \varepsilon$.

Given a topological group $G$, the group $L_0(\mu,G)$ is the
group of all step $\B$-measurable functions from $X$ to $G$
with finite range, with the pointwise multiplication and the
topology of convergence in submeasure $\mu$.\footnote{ This
  construction appears in \cite{Hartman.Mycielski}. In
  \cite{Solecki.Farah} the authors take the metric
  completion in case $G$ is locally compact. Since extreme
  amenability is invariant under taking dense subgroups, the
  choice of the definition does not affect extreme
  amenability of $L_0$ groups.} This means that given $f\in
L_0(\mu,G)$, a neibourhood $V$ of the identity in $G$ and
$\varepsilon>0$, a basic neighbourhood of $f$ is given by
$$\{h\in L_0(\mu,G): \mu(\{x\in X: h(x)\notin V\cdot
f(x)\})<\varepsilon\}.$$ Groups of this form have been
studied extensively especially in case $G=\R$. We write
$L_0(\mu)$ for $L_0(\mu,\R)$. In general form they have been
used by Hartman and Mycielski \cite{Hartman.Mycielski} to
show that any topological group can be embedded into a
connnected topological group. This is why these groups are
sometimes called the Hartman--Mycielski extensions. In
\cite{Hartman.Mycielski} Hartman and Mycielski proved that
in case $\mu$ is the Lebesgue measure, $L_0(\mu,G)$ contains
$G$ as a closed subgroup and is path-connected and
locally-path connected. The same remains true if $\mu$ is a
diffused submeasure. Thus, the map $G\mapsto L_0(\mu,G)$ is
a convariant functor from the category of topological groups
to the category of connected topological groups. Later,
Keesling \cite{Keesling} showed that if $G$ is separable and
metrizable, then the Hartman--Mycielski extension
$L_0(\mu,G)$ is homeomorphic to $\ell^2$.

A \textit{character} of an abelian topological group is a
continuous homomorphism to the unit circle $\T$. Nikodym
\cite{Nikodym} studied the groups $L_0(\mu)$ and essentially
showed \cite[Pages 139--141]{Nikodym}, that $L_0(\mu)$ does
not have nontrivial characters if and only if $\mu$ is
diffused. The same argument shows that if $G$ is any abelian
group and $\mu$ is a diffused measure, then $L_0(\mu,G)$
does not have nontrivial characters. Note at this point that
if an abelian group has a nontrivial character, then it
admits a free action on the circle $\T$ and thus cannot be
extremely amenable. Therefore, if $\mu$ is not diffused and
$G$ is abelian, then $L_0(\mu,G)$ is not extremely
amenable. One of the major open problems in the field is a
question of Pestov whether for any abelian group $H$,
extreme amenability of $H$ is equivalent to the fact that
$H$ has no nontrivial characters.

A submeasure $\mu$ is \textit{pathological} if there is no
nonzero measure $\nu$ on $X$ with $\nu\leq \mu$. A
submeasure $\mu$ is \textit{exhaustive} if for every
disjoint family $A_n\in\B$ we have $\lim_n \mu(A_n)=0$. For
a recent construction by Talagrand (answering an old
question of Maharam) of an exhaustive pathological
submeasure see \cite{Talagrand}. Extreme amenability of the
groups $L_0(\mu,G)$ has been studied and proved by several
authors for various groups and submeasures, and the
following list summarizes the current state of
knowledge. The group $L_0(\mu,G)$ is extremely amenable in
each of the following cases:
\begin{itemize}
\item if $G=\R$ and $\mu$ is pathological (Herer and
  Christensen, \cite{Herer.Christensen}),
\item if $G=\T$ and $\mu$ is a diffused measure (Glasner
  \cite{Glasner} and, independently, Furstenberg and Weiss,
  unpublished),
\item if $G$ is amenable and $\mu$ is a diffused measure
  (Pestov, \cite{Pestov:Ramsey}),
\item if $G$ is compact solvable and $\mu$ is diffused
  (Farah and Solecki, \cite{Solecki.Farah})
\end{itemize}

There is a number of cases in which the answer was not
known. The situation was unclear even for the groups $\Z$
and $\R$. In \cite[Question 2]{Solecki.Farah} Farah and
Solecki ask for what submeasures are the groups
$L_0(\mu,\Z)$ and $L_0(\mu,\R)$ extremely amenable. The same
question appears also in Pestov's book \cite[Question
8]{Pestov.old}. In this paper we answer this question and
prove the following.

\begin{theorem}\label{main}
  For any abelian topological group $G$ and arbitrary
  diffused submeasure $\mu$ the group $L_0(\mu,G)$ is
  extremely amenable.
\end{theorem}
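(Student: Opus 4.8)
The plan is to verify the standard Ramsey-theoretic (finite oscillation stability) reformulation of extreme amenability: a topological group $H$ is extremely amenable if and only if for every finite $F\subseteq H$, every neighbourhood $V$ of the identity, and every finite colouring of $H$, some left translate $gF$ lies, up to $V$, inside a single colour class. Two soft preliminary reductions then bring the problem down to $G=\Z$. First, extreme amenability is inherited by continuous homomorphic images with dense range, by finite products, and by increasing unions whose union is dense; since any finite $F\subseteq L_0(\mu,G)$ takes values in a finitely generated subgroup $G_0\leq G$, and every such $G_0$ is a quotient of some $\Z^n$, it suffices to treat $L_0(\mu,\Z^n)\cong L_0(\mu,\Z)^n$, and hence $L_0(\mu,\Z)$ itself.

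For $G=\Z$, fix a finite $F\subseteq L_0(\mu,\Z)$, a threshold $\varepsilon>0$, and an $r$-colouring. The elements of $F$ are constant on the atoms of a finite subalgebra of $\B$; using that $\mu$ is diffused, I would refine this to a partition $X=\bigcup_{i\le N}A_i$ with each $\mu(A_i)$ small, so that step functions constant on the $A_i$ are identified with vectors in $\Z^N$ and left translation by $F$ becomes translation by a fixed finite set $S\subseteq\Z^N$. The fixed-point property then reduces, following the framework of Farah and Solecki, to a purely combinatorial assertion: for a symmetric difference set $T\subseteq\Z^N$ read off from $S$ and from the submeasure threshold, any $r$-colouring of a sufficiently large box in $\Z^N$ must contain two points of the same colour differing by an element of $T$, which furnishes the almost-monochromatic translate of $F$. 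Equivalently, the Cayley-type graph on $\Z^N$ with connection set $T$ has chromatic number exceeding $r$.

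The heart of the argument is therefore the lower bound on these chromatic numbers, and this is where I expect the real difficulty. The estimate should come from algebraic topology in the spirit of Lov\'asz's solution of the Kneser conjecture: associate to each graph a topological test space (a neighbourhood- or box-type complex carrying a free $\Z/2$-action) and apply a Borsuk--Ulam-type theorem to bound the chromatic number from below by a connectivity or index invariant of that space. The genuinely hard point is quantitative: as the partition is refined (so $N\to\infty$) and the submeasure threshold shrinks, the connection set $T$ and the ambient box both change, and one must show the resulting chromatic numbers grow without bound, outpacing the fixed number of colours $r$, while faithfully encoding the subadditivity of $\mu$ rather than mere additivity of a measure. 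Diffusedness is used precisely to produce the many small pieces that make the dimension $N$ large; the pathological case is absorbed with no extra work, since the combinatorial reduction never appeals to an underlying measure.

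Finally, combining the topological lower bound with the reduction yields, for every finite $F$, neighbourhood $V$, and finite colouring, a translate of $F$ that is monochromatic up to $\mu$-error $\varepsilon$; by the oscillation-stability criterion this proves that $L_0(\mu,\Z)$ is extremely amenable, and the soft reductions of the first paragraph then deliver the theorem for arbitrary abelian $G$.
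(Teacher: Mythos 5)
Your outline does track the paper's architecture (reduce to integer-valued step functions, recast extreme amenability as unboundedness of chromatic numbers of graphs on $\Z^N$, prove a lower bound by Borsuk--Ulam), and your soft reduction to $L_0(\mu,\Z)$ via finitely generated subgroups is fine. But there is a logical gap in the reduction step. A monochromatic pair $\{x,x+t\}$ with $t\in T$ does not ``furnish the almost-monochromatic translate of $F$'' once $|F|>2$: pairs give no control over the colors of the other points of $gF$. What a pair statement does give is: for every single element $f$, neighbourhood $V$ and finite coloring, some color class contains two points whose difference lies in $f+V$. That weaker statement is in fact sufficient for extreme amenability, but only via Pestov's syndetic-set characterization (extreme amenability of $H$ is equivalent to $\overline{S-S}=H$ for every left syndetic $S\subseteq H$, \cite[Theorem 3.4.9]{Pestov}), which is exactly how Lemma \ref{connection} of the paper routes the argument; it does not follow from the finite oscillation-stability criterion for arbitrary finite $F$ that you quoted. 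This is also why the paper's graphs $\Gamma^{\P}_\varepsilon(\mu)$ encode only the single difference $\bar 1$ (two vertices $k,l$ are adjacent when $\mu(\bigcup\{A\in\P:k_A\not=l_A+1\})<\varepsilon$), and why the converse direction needs an ultrafilter limit of colorings of the finite partitions to contradict the syndetic criterion.

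The more serious gap is that the actual content of the theorem --- the chromatic-number lower bound --- is entirely absent: you explicitly defer it (``this is where I expect the real difficulty''), offering only the suggestion of a Lov\'asz-style neighbourhood or box complex with a free $\Z\slash 2$-action. The paper's proof (Sections \ref{sec:complexes} and \ref{sec:bounds}) does not follow that route, and it is not clear it could: the natural symmetry of these graphs is a cyclic one, and the argument uses a $\Z\slash p$-action for a prime $p$ chosen by Bertrand's postulate in the regime $k(d+1)<p<2k(d+1)$, where $d$ is the number of colors and $k=k_\mu(\varepsilon\slash 4d)$ is a covering number of the submeasure. One then needs the equivariant complexes $K^{n,l}_p$ of partial functions with few component intervals, the equivariant subdivision maps of Lemma \ref{simplicial}, the $l$-connectivity of the join $(\Z\slash p)^{*(l+1)}$ (Lemma \ref{connected}), the $\Z\slash p$-index version of the Borsuk--Ulam theorem, and finally a pigeonhole argument on component intervals together with submeasure estimates to extract two same-colored vertices that are adjacent. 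On top of this sits the quantitative bookkeeping --- the function $F^\mu_\varepsilon$ inverting the covering-number function and the bound $\chi(\Gamma^n_\varepsilon(\mu))\geq F^\mu_\varepsilon(C^\mu_\varepsilon\sqrt[3]{n})$ of Theorem \ref{graph} --- which is precisely what makes the chromatic numbers diverge for an \emph{arbitrary} diffused submeasure, where the growth may be extremely slow. None of this is present in, or implied by, your sketch; as it stands the proposal is a plausible road map whose crucial leg is missing.
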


Theorem \ref{main} easily generalizes to solvable groups in
place of abelian, using \cite[Lemma 3.4]{Solecki.Farah}.
Combined with the theorem of Nikodym, Theorem \ref{main}
implies the following.

\begin{corollary}
  Let $\mu$ be a submeasure. The following are equivalent:
  \begin{itemize}
  \item $\mu$ is diffused,
  \item $L_0(\mu,G)$ is extremely amenable for every abelian
    $G$,
  \item $L_0(\mu,\Z)$ is extremely amenable.
  \end{itemize}
\end{corollary}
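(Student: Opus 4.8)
The plan is to prove the cycle of implications (i) $\Rightarrow$ (ii) $\Rightarrow$ (iii) $\Rightarrow$ (i), where (i), (ii), (iii) denote the three listed conditions in order. The implication (i) $\Rightarrow$ (ii) is precisely the content of Theorem \ref{main}: if $\mu$ is diffused then $L_0(\mu,G)$ is extremely amenable for every abelian $G$. The implication (ii) $\Rightarrow$ (iii) is immediate, since $\Z$ is abelian, so one merely specializes (ii) to $G=\Z$. Thus the whole weight of the corollary rests on the remaining implication (iii) $\Rightarrow$ (i), which I would establish in its contrapositive form: if $\mu$ is \emph{not} diffused, then $L_0(\mu,\Z)$ is not extremely amenable.

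For this I would follow the route flagged in the introduction and produce a nontrivial continuous character $\chi\colon L_0(\mu,\Z)\to\T$. Once such a $\chi$ is in hand, letting $L_0(\mu,\Z)$ act on the circle by $g\cdot z=\chi(g)\,z$ gives a continuous action on a compact space with no fixed point (a fixed point $z$ would force $\chi(g)=1$ for all $g$, contradicting nontriviality), so $L_0(\mu,\Z)$ is not extremely amenable. This is the $\Z$-analogue of the theorem of Nikodym quoted above, whose ``diffused $\Rightarrow$ no character'' half is the easy covering argument; here I need the converse, the construction of the character from non-diffusedness.

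Since $\mu$ is not diffused, fix $\varepsilon_0>0$ such that $X$ is not a finite union of members of $\B$ of submeasure at most $\varepsilon_0$. Let $\mathcal S=\{A\in\B:\mu(A)\le\varepsilon_0\}$ and let $\I$ be the ideal of $\B$ generated by $\mathcal S$. The choice of $\varepsilon_0$ says exactly that $X\notin\I$, so $\I$ is a proper ideal; hence the dual filter $\{X\setminus A:A\in\I\}$ extends to an ultrafilter $\U$ on the Boolean algebra $\B$ with $\U\cap\I=\emptyset$, and in particular $\U$ contains no set of submeasure at most $\varepsilon_0$. Define $\phi\colon\B\to\T$ by $\phi(A)=\tfrac12$ if $A\in\U$ and $\phi(A)=0$ otherwise. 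A short check using that $\U$ is an ultrafilter (for disjoint $A,B$ at most one lies in $\U$, and $A\cup B\in\U$ iff one of them does) shows that $\phi$ is finitely additive as a map into $\T=\R/\Z$. Then
\[
\chi\Big(\sum_j n_j\mathbf 1_{A_j}\Big)=\exp\Big(2\pi i\sum_j n_j\,\phi(A_j)\Big),
\]
taken over the canonical partition $\{A_j\}$ of $X$, defines a homomorphism $\chi\colon L_0(\mu,\Z)\to\T$. It is nontrivial because $\chi(\mathbf 1_X)=e^{\pi i}=-1$. For continuity, if $\mu(\{f\ne 0\})<\varepsilon_0$ then every piece $A_j$ on which $f$ is nonzero satisfies $\mu(A_j)\le\varepsilon_0$, hence $A_j\notin\U$ and $\phi(A_j)=0$; thus $\chi\equiv 1$ on this neighbourhood of the identity, so $\chi$ is continuous.

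The main obstacle is the construction of $\phi$, that is, extracting from mere non-diffusedness a finitely additive $\T$-valued set function that is simultaneously nonzero and vanishing on all sufficiently small sets. The device above reduces this to the existence of an ultrafilter avoiding the small-set ideal, and the one place where non-diffusedness enters is precisely the properness of that ideal. It is worth noting that, in contrast to the $\R$-valued Nikodym character, continuity on $L_0(\mu,\Z)$ forces $\phi$ to \emph{vanish} on small sets rather than merely to be bounded, because the integer coefficients $n_j$ are unbounded; this is exactly why the sharp two-valued $\phi$ arising from an ultrafilter is the natural object here.
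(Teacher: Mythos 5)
Your proof is correct, and its skeleton is the same as the paper's: Theorem \ref{main} gives the implication from diffuseness to extreme amenability for all abelian $G$, specializing to $G=\Z$ is immediate, and the remaining implication is proved contrapositively through a nontrivial continuous character, which rules out extreme amenability via the rotation action on $\T$. The only divergence is that the paper dispatches the character step in one sentence by citing Nikodym's theorem (together with the introduction's remark that a nontrivial character yields a fixed-point-free action on the circle), whereas you construct the character from scratch; your construction is sound. Indeed, the ideal generated by $\{A\in\B:\mu(A)\le\varepsilon_0\}$ is proper precisely because diffuseness fails at $\varepsilon_0$; an ultrafilter $\U$ on $\B$ disjoint from this ideal exists by the ultrafilter lemma; and your $\chi$ is just $f\mapsto(-1)^{\lim_\U f}$, where $\lim_\U f$ is the unique value that $f$ takes on a set belonging to $\U$. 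This is a homomorphism because $\lim_\U$ is additive, it is nontrivial because $\chi(\mathbf{1}_X)=-1$, and it is continuous because it equals $1$ identically on the identity neighbourhood $\{f:\mu(\{f\ne 0\})<\varepsilon_0\}$, since each nonzero level set of such an $f$ has submeasure at most $\varepsilon_0$ and hence lies outside $\U$. What your version buys is self-containment and a particularly clean character: as you observe, over $\Z$ continuity forces the character to be locally constant, so the two-valued ultrafilter functional is the natural object, whereas the paper's citation buys brevity and covers the $\R$-valued setting that Nikodym actually treats.
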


On the othe hand, Theorem \ref{main} provides an evidence
for an affirmative answer to the question of Pestov
mentioned above, as it implies the following.

\begin{corollary}
  Let $\mu$ be a submeasure. The following are equivalent:
  \begin{itemize}
  \item $\mu$ is diffused,
  \item $L_0(\mu)$ is extremely amenable,
  \item $L_0(\mu)$ has no nontrivial characters.
  \end{itemize}
\end{corollary}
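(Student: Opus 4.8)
The plan is to prove the equivalence as a cycle of three implications, writing (i), (ii), (iii) for the three listed conditions and establishing $(i)\Rightarrow(ii)\Rightarrow(iii)\Rightarrow(i)$. Each link draws on a result already available, so the corollary is essentially a bookkeeping of what has been proved.

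For $(i)\Rightarrow(ii)$ I would simply invoke Theorem \ref{main} with $G=\R$. Since $\R$ is an abelian topological group and $\mu$ is assumed diffused, the theorem gives at once that $L_0(\mu)=L_0(\mu,\R)$ is extremely amenable.

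For $(ii)\Rightarrow(iii)$ I would argue the contrapositive, using the general observation recorded in the introduction that an abelian topological group carrying a nontrivial character cannot be extremely amenable. Concretely, if $\chi\colon L_0(\mu)\to\T$ is a continuous homomorphism, then $g\cdot z=\chi(g)z$ defines a continuous action of $L_0(\mu)$ on the compact space $\T$, and a point $z$ is fixed exactly when $\chi(g)=1$ for all $g$. Thus a nontrivial $\chi$ yields a fixed-point-free continuous action on a compact space, contradicting extreme amenability, so extreme amenability forces every character to be trivial. The only point requiring care here is confirming that this action is genuinely continuous in the topology of convergence in submeasure, which is immediate from continuity of $\chi$ together with continuity of multiplication on $\T$.

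For $(iii)\Rightarrow(i)$ I would appeal directly to the theorem of Nikodym quoted above, which asserts that $L_0(\mu)$ has no nontrivial characters if and only if $\mu$ is diffused; the relevant direction of this equivalence closes the cycle. Since all the substantive content is carried by Theorem \ref{main} and by Nikodym's result, I expect no real obstacle: the only genuinely new ingredient is the elementary verification in $(ii)\Rightarrow(iii)$, and even that reduces to checking continuity of the induced circle action.
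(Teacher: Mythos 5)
Your proposal is correct and follows exactly the route the paper intends: Theorem \ref{main} with $G=\R$ gives diffused $\Rightarrow$ extremely amenable, the standard fixed-point-free action of a group with a nontrivial character on $\T$ (noted in the paper's introduction) gives extremely amenable $\Rightarrow$ no nontrivial characters, and Nikodym's theorem closes the cycle back to diffuseness. Nothing is missing.
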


The proof of Theorem \ref{main} uses methods of algebraic
topology and the Borsuk--Ulam theorem. It builds on the work
of Farah and Solecki \cite{Solecki.Farah}, who discovered
that algebraic topological methods can be applied to study
extremely amenable groups. Farah and Solecki obtained a new
Ramsey-type theorem, whose proof was based on a construction
of a family of simplicial complexes and an application of
the Borsuk--Ulam theorem. In this paper, rather than proving
a Ramsey-type result, we show a connection of extreme
amenability of abelian $L_0$ groups with the existence of
finite bounds on chromatic numbers of certain graphs on
$\Z^n$. We use methods of algebraic topology to establish
bounds from below on these chromatic numbers. The
application of the Borsuk--Ulam theorem is based on the
ideas of Farah and Solecki but it employs different
(smaller) simplicial complexes.

On the other hand, it is worth noting that the methods of
algebraic topology have been also used by Lov\'asz
\cite{Lovasz} for showing lower bounds on chromatic numbers
of Kneser's graphs and by Matou\v sek \cite{Matousek:PAMS}
for the Kneser hypergraphs.

This paper is organized as follows. In Section
\ref{sec:chromatic} we connect extreme amenability of
abelian $L_0$ groups with colorings of graphs. In Section
\ref{sec:complexes} we construct a family of siplicial
complexes used later in the proof of Theorem. The main
result establishing the bounds on the chromatic numbers is
proved in Section \ref{sec:bounds}.

\textbf{Acknowledgement}. I would like to thank S\l awomir
Solecki for valuable discussions during my stay in
Urbana--Champaign and for many helpful comments.

\section{Extreme amenability and chromatic
  numbers}\label{sec:chromatic}

Now we define the basic object of this paper, the graphs
associated to a submeasure and a real $\varepsilon>0$. We
call a finite partition $\P$ of $X$ a \textit{measurable
  partition} if all elements of $\P$ belong to $\B$.

\begin{definition}
  Let $\mu$ be a submeasure on a set $X$, let
  $\varepsilon>0$ and let $\P$ be a finite measurable
  partition of $X$. We define the (undirected) graph
  $\Gamma^\P_\varepsilon(\mu)$ as follows. The set of nodes
  is $\Z^{\P}$ and two nodes $k=(k_A:A\in\P)$ and
  $l=(l_A:A\in\P)$ are connected with an edge
  if $$\mu(\bigcup\{A\in\P: k_A\not=l_A+1\})<\varepsilon.$$
\end{definition}

A \textit{coloring} of a graph is a function defined on the
set of its vertices such that no two vertices connected with
an edge are assigned the same value.  Given a graph $\Gamma$
we write $\chi(\Gamma)$ for its \textit{chromatic number},
i.e. the least number of colors in a coloring of the graph
$\Gamma$. The following lemma establishes the main
connection between extreme amenability of abelian $L_0$
groups and chromatic numbers of the graphs
$\Gamma^\P_\varepsilon(\mu)$.

\begin{lemma}\label{connection}
  Let $\mu$ be a submeasure on a set $X$. The following are
  equivalent:
  \begin{itemize}
  \item[(i)] the group $L_0(\mu,G)$ is extremely amenable
    for every abelian topological group $G$,
  \item[(ii)] for every $\varepsilon>0$ there is no finite
    bound on the chromatic
    numbers $$\chi(\Gamma^\P_\varepsilon(\mu)),$$ where $\P$
    runs over all finite measurable partitions of $X$.
  \end{itemize}
\end{lemma}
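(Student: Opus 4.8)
The plan is to reformulate extreme amenability through the standard finite oscillation-stability criterion and to read off the graphs $\Gamma^\P_\varepsilon(\mu)$ as encoding one distinguished translation. Throughout I identify a node $k=(k_A:A\in\P)\in\Z^\P$ with the $\B$-measurable step function equal to $k_A$ on $A$; as $\P$ ranges over all finite measurable partitions these exhaust the underlying set $D$ of $L_0(\mu,\Z)$, which is a dense subgroup, so extreme amenability may be tested on $D$. On $D$ I put the bi-invariant metric $d(f,h)=\mu(\{x:f(x)\ne h(x)\})$, which induces convergence in submeasure, and I let $\mathbf 1\in D$ be the constant function $1$. The edge relation of $\Gamma^\P_\varepsilon(\mu)$ is then exactly $d(k,l+\mathbf 1)<\varepsilon$, so the graph is the restriction to $\P$-step functions of the Cayley graph of $D$ with connection set $\{h:d(h,\mathbf 1)<\varepsilon\}$. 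I will use the characterization (see, e.g., \cite{Pestov.old}): an abelian topological group $G$ is extremely amenable iff for every finite cover $G=A_1\cup\dots\cup A_r$, every finite $F\subseteq G$ and every neighbourhood $V$ of $0$ there are $i$ and $g$ with $F+g\subseteq A_i+V$.

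For $\neg(ii)\Rightarrow\neg(i)$, fix $\varepsilon>0$ and $N$ with $\chi(\Gamma^\P_\varepsilon(\mu))\le N$ for all $\P$. I would first patch the partition-by-partition colorings into one global coloring. Inside the compact space $\{1,\dots,N\}^{D}$, each constraint $c(k)\ne c(l)$ for an edge $(k,l)$ is clopen, and since adjacency depends only on the functions and not on $\P$, any finitely many constraints lie on a common refinement and are jointly satisfiable by hypothesis; by compactness there is $c\colon D\to\{1,\dots,N\}$ with $c(k)\ne c(l)$ whenever $d(k,l+\mathbf 1)<\varepsilon$. Now suppose $L_0(\mu,\Z)$ were extremely amenable and apply the criterion to the cover $A_i=c^{-1}(i)$, the set $F=\{0,\mathbf 1\}$ and $V=\{h:d(h,0)<\varepsilon/2\}$. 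This yields $i$ and $g$ with $g,g+\mathbf 1\in A_i+V$, hence $a,a'\in A_i$ with $d(a',a+\mathbf 1)<\varepsilon$ by subadditivity of $\mu$; but then $(a,a')$ is an edge and $c(a)=c(a')=i$, a contradiction. Thus $(i)$ fails already for $G=\Z$.

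For $(ii)\Rightarrow(i)$ I would argue in two stages. First I reduce the target group: any finite subset of $L_0(\mu,G)$ takes values in a finitely generated $G_0\le G$, and extreme amenability is preserved under dense subgroups, quotients, finite products and inductive limits; together with the functoriality $G\mapsto L_0(\mu,G)$ and the identity $L_0(\mu,\Z^n)=L_0(\mu,\Z)^n$ this reduces $(i)$ to extreme amenability of the single group $L_0(\mu,\Z)$. Second, I verify finite oscillation stability for $L_0(\mu,\Z)$ from $(ii)$. After discretizing — right-uniform continuity lets me replace a cover by one factoring through a finite partition and through step functions with values in a fixed finite set — the task becomes: given a finite coloring and a finite configuration of $\P$-step functions, find a translate that is monochromatic up to $\varepsilon$. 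Normalizing the configuration to $\{0,d_2,\dots,d_m\}$, each $d_j=\sum_{A\in\P}d_j^A\chi_A$ is an integer combination of the commuting elementary shifts $\chi_A$. The single diagonal shift $\mathbf 1=\sum_A\chi_A$ is precisely the existence of a monochromatic edge in $\Gamma^\P_\varepsilon(\mu)$, i.e.\ the statement $\chi(\Gamma^\P_\varepsilon(\mu))>r$ granted by $(ii)$; to handle an individual $\chi_A$ I would pass to the restricted submeasure $\mu\restriction A$, under which $\chi_A$ is the diagonal shift and for which $(ii)$ persists (partitions of $A$ extend to partitions of $X$, and $\mu\restriction A\le\mu$ only adds edges), exploiting the splitting $L_0(\mu,\Z)\cong L_0(\mu\restriction A,\Z)\times L_0(\mu\restriction A^{c},\Z)$.

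The main obstacle is exactly this last stage: upgrading the single-shift statement — which is literally the chromatic-number hypothesis — to simultaneous control of the several commuting directions $\chi_A$ required by an arbitrary finite configuration. Two-point oscillation stability for one translation does not in general force extreme amenability, so the argument must exploit the self-similar product structure of $L_0(\mu,\Z)$ over the pieces of $\P$: I would combine the per-piece single-shift statements through the product decomposition and an amalgamation of colorings over the finitely many pieces, choosing a monochromatic translate for the diagonal shift of each restricted submeasure coherently. Making this combination uniform, and checking that $(ii)$ really transfers to the restrictions and finite products entering the argument, is the crux; the remaining steps are the soft reformulations above, and I note that diffusedness of $\mu$ is not needed for this equivalence.
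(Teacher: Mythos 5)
Your first direction (the contrapositive $\neg(\mathrm{ii})\Rightarrow\neg(\mathrm{i})$) is correct and is essentially the paper's argument: your De Bruijn--Erd\H{o}s-style compactness patching of the partition-wise colorings is the same device as the paper's ultrafilter limit, and your two-point application of the fixed-point criterion with $F=\{0,\bar 1\}$ matches the paper's computation that $(\bar 1+V_\varepsilon)\cap(X_i-X_i)=\emptyset$ for every color class $X_i$. The problem is the converse, and you have named it yourself: everything after ``the main obstacle is exactly this last stage'' is a plan, not a proof, and the gap is real. Hypothesis (ii) is literally a one-translation, two-point oscillation statement, while the criterion you chose to verify (finite oscillation stability) demands control of arbitrary finite configurations $\{0,d_2,\dots,d_m\}$. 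Your proposed repair --- splitting $L_0(\mu,\Z)\cong L_0(\mu\restriction A,\Z)\times L_0(\mu\restriction A^{c},\Z)$ and amalgamating per-piece monochromatic translates --- hits the standard failure mode of product Ramsey statements: the color of a pair in the $A$-coordinate depends on the $A^{c}$-coordinate, so the single-shift statements in the factors cannot be chosen coherently, and nothing in your sketch supplies the extra uniformity that would be needed.

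What you are missing is that the paper never attempts to verify oscillation stability for general configurations; it uses the other face of Pestov's characterization (\cite[Theorem 3.4.9]{Pestov}, the syndetic-set form): $L_0(\mu,G)$ fails to be extremely amenable if and only if there is a left syndetic $S$ with $\overline{S-S}\neq L_0(\mu,G)$. In the contrapositive this hands you finitely many translates $S_1,\dots,S_d$ of $S$ covering the group together with a \emph{single} element $f$ and an $\varepsilon>0$ such that $(f+V_\varepsilon)\cap(S-S)=\emptyset$. One then colors $\Gamma^{\P}_\varepsilon(\mu)$, for $\P$ refining the partition induced by $f$, by $c(k)=i$ iff $g_k\in S_i$, where $g_k(x)=k_A\, f(x)$ for $x\in A$ (integer multiples taken in the abelian group $G$). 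An edge between $k$ and $l$ gives $\mu(\{x: g_k(x)-g_l(x)\neq f(x)\})<\varepsilon$, and since $S_i-S_i=S-S$ by commutativity, a monochromatic edge would force $(f+V_\varepsilon)\cap(S-S)\neq\emptyset$; hence $d$ bounds $\chi(\Gamma^{\P}_\varepsilon(\mu))$ for all $\P$ refining $\P_f$, and by monotonicity under refinement for all $\P$. The point is that the edge relation $k_A\neq l_A+1$ was designed precisely so that one witness $f$ --- not the constant $\bar 1$, and not several commuting shifts --- controls every partition at once, via the map $k\mapsto g_k$ that transports the diagonal ``$+\bar 1$'' direction of $\Z^{\P}$ onto the direction $f$ in the group; the conversion of ``no fixed point'' into a single two-point obstruction is done once and for all by Pestov's theorem, not by a Ramsey-type amalgamation. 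Incidentally, this also makes your reduction to $G=\Z$ (through finitely generated quotients of $\Z^n$, finite products and directed unions) unnecessary --- that reduction is sound, but the paper's coloring works verbatim for every abelian $G$.
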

\begin{proof}
  (i)$\Rightarrow$(ii) Suppose that for some $\varepsilon$
  there is a finite bound $d$ on the chromatic numbers
  $\chi(\Gamma^\P_\varepsilon(\mu))$. It is enough to show
  that the group $L_0(\mu,\Z)$ is not extremely
  amenable. For each measurable partition $\P$ of $X$ pick a
  coloring $c_\P:\Z^\P\rightarrow\{1,\ldots,d\}$ of
  $\Gamma^\P_\varepsilon(\mu)$ into $d$ many colors. Fix an
  ultrafilter $\U$ on the set of all finite measurable
  partitions of $X$. For every step $\B$-measurable function
  $f:X\rightarrow\Z$ with finite range let $\P_f$ be the
  partition induced by $f$, i.e the one
  $\{f^{-1}(\{k\}):k\in\Z\}$. For every partition $\P$
  refining $\P_f$ let $f_\P$ be the element of $\Z^\P$ such
  that $f_\P(i)=k$ if and only if the $i$-th element of $\P$
  is contained in $f^{-1}(\{k\})$. Now let $c$ be a coloring
  of $L_0(\mu,\Z)$ defined as the limit over the
  ultrafilter: $$c(f)=\lim_\U c_\P(f_\P).$$ Let $X_i=\{f\in
  L_0(\mu,\Z): c(f)=i\}$ for each $i\in\{1,\ldots,d\}$ and
  note that the sets $X_i$ cover the group
  $L_0(\mu,\Z)$. Let $\bar 1\in L_0(\mu,\Z)$ be the constant
  function with value $1$. Note that by the definition of
  the graphs $\Gamma^\P_\varepsilon(\mu)$ we have that for
  each $i\in\{1,\ldots,d\}$ $$(\bar
  1+V_\varepsilon)\cap(X_i-X_i)=\emptyset,$$ where
  $V_\varepsilon=\{f\in L_0(\mu,\Z): \mu\{x\in X:
  f(x)\not=0\}<\varepsilon\}$ is the basic neighbourhood of
  the identity in $L_0(\mu,\Z)$. This implies that $\bar
  1\notin\overline{X_i-X_i}$ for each $i\leq d$, so
  $L_0(\mu,\Z)$ is not extremely amenable by \cite[Theorem
  3.4.9]{Pestov}.

  (ii)$\Rightarrow$(i) Let $G$ be an abelian topological
  group and suppose that $L_0(\mu,G)$ is not extremely
  amenable. By \cite[Theorem 3.4.9]{Pestov}, there is a left
  syndetic set $S\subseteq L_0(\mu,G)$ such that
  $G\not=\overline{S- S}$. Write $e$ for the neutral element
  of $G$. Assume that there are $d$ many left translates
  $S_1,\ldots,S_d$ of $S$ which cover $G$ and an element $f$
  of $L_0(\mu,G)$ such that $f\notin\overline{S-S}$. This
  means that there is a neighborhood $W$ of the identity in
  $G$ and an $\varepsilon>0$ such that $$(f+W_\varepsilon)\
  \cap\ (S-S)=\emptyset$$ where $W_\varepsilon=\{h\in
  L_0(\mu,G): \mu(\{x\in [0,1]: h(x)\notin
  W\})<\varepsilon\}$. Let $V_\varepsilon=\{h\in L_0(\mu,G):
  \mu(\{x\in [0,1]: h(x)\not=e\})<\varepsilon\}$ and note
  that $V_\varepsilon\subseteq W_\varepsilon$, so $(f+
  V_\varepsilon)\ \cap\ (S-S)=\emptyset$.

  Let $\P_f$ be the partition generated by $f$,
  i.e. $\P_f=\{f^{-1}(\{g\}):g\in G\}$.  Now we will
  construct $d$-colorings of the graphs
  $\Gamma^{\P}_\varepsilon(\mu)$ for all $\P$ refining
  $\P_f$. Since we have
  $\chi(\Gamma^{\P_0}_\varepsilon(\mu))\leq\chi(\Gamma^{\P_1}_\varepsilon(\mu))$
  whenever $\P_1$ refines $\P_0$, this will end the proof.

  Let $\P$ be a measurable partition of $X$ refining $\P_f$
  and let $k\in\Z^{\P}$. For each $A\in\P$ let $i_A\in\Z$ be
  such that $A\subseteq f^{-1}(\{i_A\})$ and let $g_k\in
  L_0(\mu,G)$ be such that $$g_k(x)=i_A f(x)\quad\mbox{if
  }x\in A.$$ Put $$c(k)=i\quad\mbox{if and only if}\quad
  g_k\in S_i.$$ We claim that $c$ defines a coloring of the
  graph $\Gamma^{\P}_\varepsilon(\mu)$. Indeed, if
  $k,l\in\Z^\P$ have the same color and are connected with
  an edge, then $\mu(\bigcup\{A\in\P:
  k(A)\not=l(A)+1\})<\varepsilon$, so $$\mu(\{x\in X:
  g_k(x)-g_l(x)\not=f(x)\})<\varepsilon$$ and hence $(f+
  V_\varepsilon)\cap (S- S)=(f+ V_\varepsilon)\cap (S_i-
  S_i)\not=\emptyset$.

\end{proof}

\section{Simplicial complexes}\label{sec:complexes}

Here we gather the definitions and constructions of
simplicial complexes used in the proof o Theorem \ref{graph}
in the next section.

Given a finite set $V$, a \textit{symplicial complex} with
vertex set $V$ is a finite family of subsets of $V$, closed
under taking subsets. Elements of this family are called
\textit{simplices} of the simplicial complex. Given two
simplicial complexes $K$ and $L$ we say that a map
$f:K\rightarrow L$ is \textit{simplicial} if $\{f(v):v\in
F\}\in L$ whenever $F\in K$. Given a simplicial complex $K$
with vertex set $V$ and an action of a group $H$ on $V$, we
say that $K$ is an \textit{$H$-complex} if for every $F\in
K$ and $h\in H$ we have $\{h(v):v\in F\}\in K$. Given two
$H$-complexes $K$ and $L$ and a simplicial map
$f:K\rightarrow L$ we say that $f$ is
\textit{$H$-equivariant} and write $f:K\xrightarrow{H} L$ if
$f(h(v))=h f(v)$ for each $v\in V$ and $h\in H$.  Given two
simplicial complexes $K$ and $L$ with disjoint sets of
vertices, we define their \textit{join} and denote it by
$K*L$ as the simplicial complex $$\{F\cup G: F\in K,G\in
L\}.$$ If $K$ is a simplicial complex, then its
\textit{barycentric subdivision}, denoted by $\sd(K)$, is
the simplicial complex with the vertex set $K$ defined
as $$\{C\subseteq K: C\not=\emptyset\mbox{ and }C\mbox{ is a
  chain}\}.$$ By $\sd^k(K)$ we denote the $k$-fold iteration
of the barycentric subdivision. If $K$ is an $H$-complex,
then $\sd(K)$ becomes an $H$-complex too, in the natural
way. Given a simplicial complex $K$, we write $||K||$ for
its \textit{geometric realization} (see \cite[Page
537]{Hatcher} or \cite[Page 14]{Matousek}). Note that if a
group $H$ acts on $K$, then this action can be naturally
extended to an action on $||K||$.

Given two simplicial complexes $K$ and $L$ and a simplicial
map $f:K\rightarrow L$, there is a natural simplicial map
$\sd(f):\sd(K)\rightarrow \sd(L)$. If $K$ and $L$ are
$H$-complexes and and $f$ is $H$-equivariant, then $\sd(f)$
is $H$-equivariant too.

Given a prime number $p$ we write $\Z\slash p$ for the
cyclic group of rank $p$ and $+_p$ for the addition modulo
$p$.

\begin{definition}
  Given $n,p\in\N$ and a partial function
  $f:\{1,\ldots,n\}\rightarrow\{1,\ldots,p\}$ a
  \textit{component interval} of $f$ is any maximal interval
  $I\subseteq\{1,\ldots,n\}$ such that $f$ is constant on
  $I\cap\dom(f)$. Note that the component intervals of $f$
  are pairwise disjoint and cover $\dom(f)$. Given a
  nonempty partial function
  $f:\{1,\ldots,n\}\rightarrow\{1,\ldots,p\}$, the
  \textit{last component interval of} $f$ is the one
  containing $\max(\dom(f))$.
\end{definition}

Let $l,n$ be natural numbers with $n\geq l$ and let $p$ be a
prime number. Define $V^{n,l}_p$ as the set of nonempty
partial functions $f$ from $\{1,\ldots, n\}$ to $\Z\slash p$
which have the following properties:
\begin{itemize}
\item[(i)] $n-|\dom(f)|\leq l$,
\item[(ii)] the number of component intervals of $f$ is at
  most $l$.
\end{itemize}

Let $K^{n,l}_p$ be the simplicial complex whose vertices set
is $V^{n,l}_p$ and whose simplices are those subsets of
$V^{n,l}_p$ which form a chain with respect to
inclusion. There is a natural action of $\Z\slash p$ on
$K^{n,l}_p$ defined as follows. If $f\in V^{n,l}_p$ and
$k\in\Z\slash p$, then $k+f\in V^{k,l}_p$ is such that
$\dom(k+f)=\dom(f)$ and $(k+f)(i)=k+f(i)$ for each
$i\in\dom(f)$. 

The following lemma is a variant of an analogous
statement proved by Farah and Solecki \cite[Page
4]{Solecki.Farah} for the complexes used in their proof.

\begin{lemma}\label{simplicial}
  There is a simplicial
  map $$s:\sd(K^{n,l}_p)\xrightarrow{\Z\slash p}
  K^{n+1,l}_p.$$
\end{lemma}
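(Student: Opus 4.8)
The plan is to define $s$ on vertices and verify that it is order-preserving and $\Z\slash p$-equivariant; simpliciality then follows automatically. Recall that a vertex of $\sd(K^{n,l}_p)$ is a nonempty chain $C=\{f_0\subsetneq\cdots\subsetneq f_m\}$ in $V^{n,l}_p$, and that the simplices of $\sd(K^{n,l}_p)$ are the flags of such chains. Thus, to produce a simplicial map it suffices to give an assignment $C\mapsto s(C)\in V^{n+1,l}_p$ that is monotone with respect to inclusion: if $C^{(0)}\subseteq\cdots\subseteq C^{(r)}$ is a flag, then the images $s(C^{(0)})\subseteq\cdots\subseteq s(C^{(r)})$ form a chain in $V^{n+1,l}_p$, i.e. a simplex of $K^{n+1,l}_p$. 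Equivariance, in turn, will be automatic provided $s(C)$ is built only by \emph{copying} values of the $f_j$ (copying a value commutes with adding a constant $k\in\Z\slash p$, whereas taking differences would only be invariant).

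First I would record the stability observation that, since the elements of a chain $C$ are totally ordered by inclusion, any two of them agree wherever both are defined, so each coordinate value of $\mathrm{top}(C):=f_m$ is unchanged when $C$ is enlarged. Accordingly I set $s(C)$ on the first $n$ coordinates equal to $\mathrm{top}(C)$, which is manifestly monotone in $C$ and $\Z\slash p$-equivariant. The extra coordinate $n+1$ is used to repair the defect bound (i): I leave it undefined unless the smallest element of $C$ already has the maximal defect $l$, and in that case I set $s(C)(n+1)$ equal to the value of the last component interval of $h_C$, where $h_C$ is the \emph{largest} element of $C$ of defect $l$. The point of reading $n+1$ off $h_C$ rather than off $\mathrm{top}(C)$ is monotonicity: if $C\subseteq C'$ then $\mathrm{top}(C)\subseteq\mathrm{top}(C')$, passing to the larger chain only shrinks the defect of the top, and both $h_C$ and $h_{C'}$ have domain of size $n-l$ with $h_C\subseteq h_{C'}$, forcing $h_C=h_{C'}$; a short case analysis then gives $s(C)\subseteq s(C')$ in every case, yielding both monotonicity and (by value-copying) equivariance.

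It remains to check $s(C)\in V^{n+1,l}_p$. The defect bound (i) is immediate from the case split: when $n+1$ is left undefined the top has defect at most $l-1$, so $s(C)$ has defect at most $l$; when $n+1$ is defined its defect equals that of $\mathrm{top}(C)$, which is at most $l$. The delicate point, and the step I expect to be the main obstacle, is the bound (ii) on the number of component intervals: appending a value at coordinate $n+1$ can a priori split off a new last component interval and push the count from $l$ to $l+1$. This is precisely where a genuine tension appears — bound (ii) wants the appended value to \emph{continue} the last component interval of $\mathrm{top}(C)$, i.e. to equal its last value, whereas monotonicity forced the stable value coming from $h_C$, which need not coincide with the last value of $\mathrm{top}(C)$. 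Resolving this compatibility is the technical heart of the lemma: I would tune the data placed on the first $n$ coordinates and the choice at $n+1$ so that, in the only situation where $n+1$ is defined (the minimum of $C$ has defect $l$), the appended value extends rather than splits the last component interval, using the hypothesis $n\geq l$ and the definition of $V^{n,l}_p$ to guarantee the required slack in the interval count. Once (i) and (ii) are verified, monotonicity gives simpliciality and value-copying gives $\Z\slash p$-equivariance, completing the construction of $s$.
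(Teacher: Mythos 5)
Your proposal follows the paper's general scheme (a vertex map on chains, an appended value at coordinate $n+1$, order\-/preservation giving simpliciality, value\-/copying giving equivariance), but it contains a genuine gap, and it is exactly the one you flag yourself: condition (ii) is never verified, only deferred to an unspecified ``tuning.'' The gap is not a missing routine check --- the map you actually define fails to land in $V^{n+1,l}_p$. Take $n=3$, $l=2$, any prime $p$, and the chain $C=\{f,g\}$ with $f=\{(1,0)\}$ and $g=\{(1,0),(3,1)\}$. Both vertices lie in $V^{3,2}_p$ (defects $2$ and $1$, component interval counts $1$ and $2$), and $\min(C)=f$ has defect $l=2$, so your rule produces $s(C)=g\cup\{(4,0)\}$, the appended value $0$ being read off the last component interval of $h_C=f$. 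But $\{(1,0),(3,1),(4,0)\}$ has three component intervals while $l=2$, so $s(C)$ is not a vertex of $K^{4,2}_p$. Thus the ``tension'' you describe is real, your construction does not resolve it, and since the resolution is the technical heart of the lemma (as you say), the lemma remains unproved.

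The paper dissolves this particular problem by making a different case split: not on whether $\min(C)$ has full defect, but on whether the chain is a singleton. For a nontrivial chain $f_1\subsetneq\ldots\subsetneq f_m$ it sets $s(C)=f_m$ with \emph{nothing} appended; condition (i) then holds in $\{1,\ldots,n+1\}$ because nontriviality forces $n-|\dom(f_m)|\leq l-1$, and (ii) is untouched. Appending happens only for singleton chains $\{f_1\}$, and the appended value is the value of $f_1$ \emph{itself} on its last component interval, so the new point $n+1$ merges into that interval and the count in (ii) never increases. That said, your instinct about cross\-/case comparability is legitimate and is precisely what the paper's write\-/up leaves unexamined: the paper's map sends the flag $\{f\}\subsetneq\{f,g\}$ (with $f\subsetneq g$) to the pair $f\cup\{(n+1,q_f)\}$ and $g$, where $q_f$ is the last value of $f$, and these two vertices are not comparable, so simpliciality on such mixed flags is not actually checked there. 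In other words, the two constructions have complementary weaknesses --- the paper's vertex map stays inside $V^{n+1,l}_p$ but its behavior on flags is asserted rather than proved, while yours is monotone on flags but exits the complex. Your proof, as written, establishes only the latter half of what is needed, and the counterexample above shows it cannot be completed without changing the construction itself.
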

\begin{proof}
  Every element of $\sd(K^{n,l}_p)$ is a chain of partial
  functions $\mathcal{F}=\{f_1\subseteq\ldots\subseteq
  f_m\}$. The way we assign an element of $K^{n+1,l}_p$ to
  $\mathcal{F}$ will depend on whether $m=1$ or $m>1$.

  If $\mathcal{F}$ is a nontrivial chain, i.e. $f_1\not=
  f_m$, then note that $n-|\dom(f_m)|\leq l-1$. This implies
  that $f_m\in K^{n+1,l}_p$, as $n+1-|\dom(f_m)|\leq l$ and
  the condition (ii)) is satisfied for $f_m$ since
  $\mathcal{F}\in\sd(K^{n,l}_p)$. Put $s(\mathcal{F})=f_m$
  in this case.

  If $\mathcal{F}$ is a trivial chain, i.e.
  $\mathcal{F}=\{f_1\}$, then let $q$ be the value assumed
  at the last component interval of $f_1$. Put
  $s(\mathcal{F})=f_1\cup\{(n+1,q))\}$. Now
  $s(\mathcal{F})\in K^{n+1,l}_p$ as the number of component
  intervals of $s(\mathcal{F})$ is the same as that of
  $f_1$, and (i) is satisfied as
  $n+1-|\dom(s(\mathcal(F)))|=n-|\dom(f_1)|$.

  It is clear that so defined $s$ preserves the action of
  $\Z\slash p$.
\end{proof}

Let $S^{l+1}_p$ be the complex whose vectices are partial
functions $f$ from $\{1,\ldots l+1\}$ to $\Z\slash p$ with
$|\dom(f)|=1$ and let the simplices of $S^{l+1}_p$ be those
subsets of its vertices whose union forms a partial function
from $\{1,\ldots l+1\}$ to $\Z\slash p$. In other words,
$S^{l+1}_p$ is the join $(\Z\slash p)^{*(l+1)}$.

\begin{lemma}\label{connected}
  $S^{l+1}_p$ is $l$-connected.
\end{lemma}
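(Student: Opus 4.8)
The plan is to read off the homotopy type of $||S^{l+1}_p||$ from its presentation as the iterated join $(\Z\slash p)^{*(l+1)}$, each factor being the $0$-dimensional complex on the $p\geq 2$ vertices of $\Z\slash p$, and then to extract the connectivity. The two inputs I would use are the join connectivity theorem --- for nonempty complexes $X$ and $Y$ the realization $||X*Y||$ is $(\mathrm{conn}(X)+\mathrm{conn}(Y)+2)$-connected (see \cite{Matousek}) --- and the fact that an iterated join of finite discrete sets is homotopy equivalent to a wedge of spheres. Both follow from a single induction on the number of join factors, so I would run that induction once and read off the connectivity at the end.

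Concretely, I would track reduced homology through the join formula $\tilde H_n(||X*Y||)\cong\bigoplus_{i+j=n-1}\tilde H_i(||X||)\otimes\tilde H_j(||Y||)$; every group occurring is free, so no $\mathrm{Tor}$ terms appear. Since $\tilde H_\ast(\Z\slash p)$ is free of rank $p-1$ and concentrated in degree $0$, the induction yields that $\tilde H_\ast(||S^{l+1}_p||)$ is free, of rank $(p-1)^{l+1}$, and concentrated in degree $l$; in other words $||S^{l+1}_p||$ is homotopy equivalent to a wedge of $l$-spheres. Simple connectivity --- which the join theorem guarantees as soon as at least three factors are present --- together with the Hurewicz theorem then promotes this homological statement to the corresponding vanishing of homotopy groups, giving the $l$-connectedness asserted in the lemma.

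The hard part is the homology-to-homotopy passage, since Hurewicz needs simple connectivity and this fails in the bottom cases $l\leq 1$, where $||S^{l+1}_p||$ is a finite point set and a wedge of circles respectively; I would dispose of these two cases by hand against the claimed connectivity. The inductive bookkeeping --- checking that the homology stays free and concentrated in the top degree at each step --- is routine and presents no real obstacle. Finally, the $\Z\slash p$-action plays no role in the connectivity statement itself, so I would suppress equivariance here; it becomes relevant only when this connectivity is fed into the Borsuk--Ulam / Dold-type argument of the next section.
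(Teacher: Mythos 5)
Your homological computation is correct, but your final step contradicts it. The join formula does give that $\tilde H_*(||S^{l+1}_p||)$ is free of rank $(p-1)^{l+1}$ and concentrated in degree $l$, so $||S^{l+1}_p||$ is a wedge of $(p-1)^{l+1}$ spheres of dimension $l$. But such a wedge is exactly $(l-1)$-connected, not $l$-connected: under the standard convention ($l$-connected means $\pi_i=0$ for all $i\leq l$), $l$-connectedness forces $\tilde H_l=0$ by Hurewicz, whereas you have just shown $\tilde H_l\cong\Z^{(p-1)^{l+1}}\neq 0$. Your own base cases are the clearest symptom: for $l=0$ the space is $p\geq 2$ discrete points, which is not even path-connected, and for $l=1$ a wedge of circles is not simply connected. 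These cases cannot be ``disposed of by hand against the claimed connectivity'' --- they are counterexamples to it, and the fact that your plan calls for handling them separately rather than recognizing this is where the argument breaks. What your proof actually establishes is $(l-1)$-connectedness (and, as a bonus, that this is sharp).

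In fairness, the paper's own one-line proof carries the same off-by-one: iterating $\mathrm{conn}(K*L)\geq\mathrm{conn}(K)+\mathrm{conn}(L)+2$ from \cite[Proposition 4.4.3]{Matousek} over $l+1$ copies of the $(-1)$-connected discrete space $\Z\slash p$ yields $(l+1)(-1)+2l=l-1$, i.e.\ again only $(l-1)$-connectedness, so Lemma \ref{connected} as stated is not what either argument proves. The discrepancy is harmless downstream: $(l-1)$-connectedness of $\sd^{l_n}(S^{l+1}_p)$ still gives $\ind_{\Z\slash p}(\sd^{l_n}(S^{l+1}_p))\geq l$ by \cite[Proposition 6.2.4]{Matousek}, and since $l=dp>d(p-1)$, the Borsuk--Ulam step in the proof of Theorem \ref{graph} goes through with $l$ in place of $l+1$. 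Methodologically, your route genuinely differs from the paper's: the paper simply iterates the join connectivity bound, while you compute the exact homotopy type homologically and convert via Hurewicz; your approach yields strictly more information (it pins the connectivity exactly and shows the lemma's stated bound is unattainable), at the cost of the Hurewicz bookkeeping. To repair your write-up, replace the last sentence by the conclusion that $S^{l+1}_p$ is $(l-1)$-connected, and note that this weaker statement suffices for the application.
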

\begin{proof}
  This follows from the fact that if $K$ is $k$-connected
  and $L$ is $l$-connected, then $K*L$ is $k+l+2$-connected
  \cite[Proposition 4.4.3]{Matousek}.
\end{proof}

Note that $V^{l+1,l+1}_p$ consists of all nonempty partial
functions from $\{1,\ldots l+1\}$ to $\Z\slash p$, as the
conditions (iii) and (iv) are empty in this case. Thus, the
identity is a simplicial map
$$i:S^{l+1}_p\xrightarrow{\Z\slash p}  K^{l+1,l+1}_p,$$ which
obviously preserves the action of $\Z\slash p$.

\section{Bounds on chromatic numbers}\label{sec:bounds}

Given a diffused submeasure $\mu$, for each $\delta>0$ there
is a finite covering of $X$ with sets in $\B$ with
submeasure less than $\delta$. Let $k_\mu(\delta)$ be the
minimal number of elements in such a covering.  Given
$\varepsilon>0$ consider the function
$k_\varepsilon^\mu:\N\rightarrow\N$ defined as
$k_\varepsilon^\mu(d)=k_\mu(\varepsilon\slash 4d)$ and for
each $d\in\N$
let $$\Q^\varepsilon_d=\{I^\varepsilon_1,\ldots,I^\varepsilon_{k_\varepsilon^\mu(d)}\}$$
be a measurable partition of $X$ into $k_\varepsilon^\mu(d)$
many sets of submeasure less than $\varepsilon\slash
4d$. Note that $k_\varepsilon^\mu$ is increasing and
\begin{equation}\label{increasing}
  k_\varepsilon^\mu(d)\geq d\mu(X)(4\slash\varepsilon).
\end{equation}
Let $F_\varepsilon^\mu:\N\rightarrow\N$ be any increasing
function such that
\begin{equation}\label{inverse}
  k_\varepsilon^\mu\circ F_\varepsilon^\mu(m)=m
\end{equation}
for each $m\in\N$. Such a function exists since
$k_\varepsilon^\mu$ is increasing and unbounded by
(\ref{increasing}). Moreover,
\begin{equation}\label{limit}
  \lim_{m\rightarrow\infty}F_\varepsilon^\mu(m)=\infty.  
\end{equation}
The rate of growth of $F_\varepsilon^\mu$ may be very low if
$\mu$ is not a measure (for example, for the Hausdorff
submeasures constructed in \cite{Solecki.Farah}). On the
other hand, if $\mu$ is a measure, then $F^\mu_\varepsilon$
is linear. However, for proving Theorem \ref{main} we only
need the fact that $F^\varepsilon_\mu$ diverges to infinity.

For $\varepsilon>0$ write
$C^\mu_\varepsilon=\sqrt[3]{\mu(X)^2\slash
  16\varepsilon}$. For $n\in\N$ and $\varepsilon>0$ let
$\P^\varepsilon_n(\mu)$ be a finite measurable partition of
$X$ with, say $k^\varepsilon_n(\mu)$, many sets in $\B$ of
submeasure less than $1\slash n$ which refines all
$\Q^\varepsilon_d(\mu)$ for $d\leq
F^\mu_\varepsilon(C^\mu_\varepsilon\sqrt[3]{n})$. Write
$\Gamma^n_\varepsilon(\mu)$ for the graph
$\Gamma^{\P^\varepsilon_n(\mu)}_\varepsilon(\mu)$.

Here is the main result which, composed with Lemma
\ref{connection} and (\ref{limit}), implies Theorem
\ref{main}.

\begin{theorem}\label{graph}
  Let $\mu$ be a diffused submeasure. Given $\varepsilon>0$
  we have $$\chi(\Gamma^n_\varepsilon(\mu))\geq
  F_\varepsilon^\mu(C^\mu_\varepsilon\sqrt[3]{n}).$$
\end{theorem}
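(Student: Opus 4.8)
The plan is to prove the contrapositive-flavored estimate: I will show that every proper coloring of $\Gamma^n_\varepsilon(\mu)$ uses at least $l+1$ colors, where I set $l+1=F^\mu_\varepsilon(C^\mu_\varepsilon\sqrt[3]{n})$ and fix a prime $p$. Write $\P=\P^\varepsilon_n(\mu)$ for the relevant partition, with $N$ sets, each of submeasure $<1\slash n$, refining $\Q^\varepsilon_{l+1}$. The overall strategy is the topological lower bound in the style of Lov\'asz and Farah--Solecki: manufacture a highly connected free $\Z\slash p$-complex together with a $\Z\slash p$-equivariant map into the vertex set $\Z^\P$ of the graph that respects adjacency, so that a hypothetical coloring with few colors produces a $\Z\slash p$-equivariant map out of an $l$-connected space into a low-dimensional free $\Z\slash p$-complex, which the Borsuk--Ulam theorem forbids.

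First I would assemble the connected complex feeding into $K^{N,l+1}_p$. Starting from the inclusion $i:S^{l+1}_p\xrightarrow{\Z\slash p}K^{l+1,l+1}_p$ and iterating Lemma \ref{simplicial} with the second parameter held fixed at $l+1$, I compose the maps $s,\ \sd(s),\ \sd^2(s),\dots$ to obtain a $\Z\slash p$-equivariant simplicial map $\sd^{N-l-1}(S^{l+1}_p)\xrightarrow{\Z\slash p}K^{N,l+1}_p$; this needs only $N\geq l+1$, which holds since $N$ is large. Because barycentric subdivision does not alter the geometric realization, the domain is homeomorphic to $\|S^{l+1}_p\|$, which is $l$-connected by Lemma \ref{connected}.

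The heart of the argument, and the step I expect to be the main obstacle, is the construction of a $\Z\slash p$-equivariant map $\phi$ from $K^{N,l+1}_p$ into $\Z^\P$ that carries the shift $f\mapsto 1+f$ to (approximate) translation by $\bar 1$, so that shift-consecutive vertices land on an edge of $\Gamma^n_\varepsilon(\mu)$. I would define $\phi(f)\in\Z^\P$ from a partial function $f$ by a staircase constructed along the component intervals of $f$, filling in a default value on the at most $l+1$ undefined coordinates. The crucial point is to bound the submeasure of the set on which $\phi(f)$ and $\phi(1+f)$ fail to differ by exactly $1$: this set is covered by the at most $l+1$ boundary regions between component intervals, each contained in a single set of $\Q^\varepsilon_{l+1}$ of submeasure $<\varepsilon\slash 4(l+1)$, together with the at most $l+1$ undefined coordinates, each a $\P$-set of submeasure $<1\slash n$. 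The total is $<\varepsilon\slash 4+(l+1)\slash n$, and the cube-root scaling in $C^\mu_\varepsilon\sqrt[3]{n}$, combined with the estimate $F^\mu_\varepsilon(m)\leq m\varepsilon\slash 4\mu(X)$ forced by (\ref{increasing}), guarantees $l+1=o(n)$, so this quantity is $<\varepsilon$ and adjacency is preserved. The delicate bookkeeping is to verify that this submeasure estimate survives for every pair of vertices spanning a simplex of $K^{N,l+1}_p$, not merely for shift-related pairs, which is exactly where the bounds (i) and (ii) defining $V^{n,l}_p$ are used.

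Finally I would combine the pieces. Composing $\sd^{N-l-1}(S^{l+1}_p)\to K^{N,l+1}_p$ with $\phi$ gives a $\Z\slash p$-equivariant map into $\Z^\P$ that sends simplices to graph-adjacent tuples. Feeding a hypothetical proper coloring with $d$ colors through this map, and recording the pattern of colors along $\Z\slash p$-orbits, yields a $\Z\slash p$-equivariant map from the $l$-connected space $\|S^{l+1}_p\|$ into a free $\Z\slash p$-complex whose dimension is controlled by $d$. The Borsuk--Ulam (Dold-type) theorem rules out such a map unless the target dimension is large enough, and the resulting index/dimension accounting yields exactly $d\geq l+1$. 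Since the coloring was arbitrary, this gives $\chi(\Gamma^n_\varepsilon(\mu))\geq F^\mu_\varepsilon(C^\mu_\varepsilon\sqrt[3]{n})$, as desired.
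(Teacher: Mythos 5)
Your opening step (iterating Lemma \ref{simplicial} against the inclusion $i$ to get a $\Z\slash p$-equivariant map $\sd^{k_n-l-1}(S^{l+1}_p)\rightarrow K^{k_n,l+1}_p$, with Lemma \ref{connected} supplying $l$-connectedness) is exactly the paper's construction. But the step you yourself flag as ``the heart of the argument'' --- a coloring-independent, $\Z\slash p$-equivariant map $\phi:K^{k_n,l+1}_p\rightarrow\Z^{\P}$ under which \emph{every} shift-consecutive pair $\phi(f),\phi(1+_pf)$ is an edge of $\Gamma^n_\varepsilon(\mu)$ --- is a genuine gap, and your proposed accounting for it is provably wrong. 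Test it on a constant total function $f\equiv v$: this is a vertex of $K^{k_n,l+1}_p$ with a single component interval and no undefined coordinates, so your failure bound (``boundary regions between component intervals plus missing coordinates'') is empty, i.e.\ you are promising exact equivariance $\phi(1+_pf)=\phi(f)+\bar 1$; iterating around the orbit gives $\phi(f)=\phi(p+_pf)=\phi(f)+p\cdot\bar 1$, a contradiction. The true obstruction is the wraparound $p-1\mapsto 0$: for any lifting rule the defect must be paid somewhere on each $\Z\slash p$-orbit, and it is paid on residue preimages (unions of whole component intervals), which can carry submeasure close to $\mu(X)$ --- e.g.\ for the coordinatewise standard lift the step $f\mapsto 1+_pf$ fails exactly on $f^{-1}(\{p-1\})$. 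No bookkeeping confined to inter-interval boundaries and the $\leq l+1$ missing points can absorb this.

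This is precisely the difficulty the paper resolves by a different mechanism, and in the opposite order of quantifiers. The paper uses only the na\"ive zero-extension $f\mapsto\bar f$, which preserves adjacency for essentially no shift pairs; instead, the hypothetical $d$-coloring is itself turned into a map $||c'||:||K^{k_n,l+1}_p||\rightarrow\R^d$, the Dold-type Borsuk--Ulam theorem (applicable because $l=dp$, so $l+1>d(p-1)$) produces an orbit on which $||c'||$ is constant, and only \emph{then} a combinatorial argument --- Bertrand's postulate giving $k(d+1)<p<2k(d+1)$ as in (\ref{eq:eqq0}), the bound $|A_h|\leq k-1$ on residues whose component intervals straddle two of the small intervals $I_1,\ldots,I_k$, and a pigeonhole using $(p-k+1)(d+1)>l+1$ from (\ref{eq:eqq1}) --- locates a single residue $q_0$ with $\mu_n(h^{-1}(\{q_0\}))\leq\varepsilon\slash 4$, hence a single same-colored pair $\bar f,\bar g$ in that orbit's chain which \emph{is} an edge. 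Adjacency is achieved for one pair per orbit, chosen after the coloring is known; it cannot be front-loaded into an equivariant map. Note also that your parameter scheme cannot feed the topological step: you fix $l+1=F^\mu_\varepsilon(C^\mu_\varepsilon\sqrt[3]{n})$ and ``a prime $p$'' in advance, but ruling out a coloring with $d$ colors via the Dold-type theorem needs $l+1>d(p-1)$, which for $d$ close to $l+1$ forces $p=2$, while the wraparound/pigeonhole control needs $p>k(d+1)$. The paper escapes this circularity by letting $k$, $p$ and $l=dp$ all depend on the hypothesized $d<F^\mu_\varepsilon(C^\mu_\varepsilon\sqrt[3]{n})$, with the cube root entering through the Claim $(l+1)\slash n<\varepsilon\slash 8$.
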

\begin{proof}
  Fix $n\in\N$. Write $k_n$ for $k^\varepsilon_n(\mu)$ and
  $\mu_n$ for the submeasure on $\{1,\ldots,k_n\}$ induced
  by $\mu$, i.e $\mu_n(B)=\mu(\bigcup\{A_i:i\in B\})$, where
  $\{A_1,\ldots,A_{k_n}\}$ is an enumeration of $\P_n(\mu)$.

  Suppose that
  $d<K^\varepsilon_\mu(C^\mu_\varepsilon\sqrt[3]{n})$ and
  there exists a coloring
  $c:\Z^{k_n}\rightarrow\{1,\ldots,d\}$ of the graph
  $\Gamma^n_\varepsilon(\mu)$. Let $k=k(\varepsilon\slash
  4d)$. By the assumption on $\P^\varepsilon_n(\mu)$, we can
  assume (possibly rearranging the numbers $1,\ldots,k_n$)
  that there are consequtive intervals $I_1,\ldots,I_k$
  covering $\{1,\ldots,k_n\}$ each of submeasure $\mu_n$
  less than $1\slash 4d$. By the Bertrand postulate
  \cite[Chapter 10]{Sierpinski.Numbers}, pick a prime number
  $p$ with
  \begin{equation}
    \label{eq:eqq0}
    k(d+1)<p<2k(d+1)
  \end{equation}
  Since $p>k(d+1)$, we have that
  \begin{align*}
    \frac{k}{p}&<\frac{1}{d+1}=1-\frac{d}{d+1},\\
    1-\frac{k}{p}&>\frac{d}{d+1},\\
    (1-\frac{k}{p})(d+1)&>d,
  \end{align*}
  which implies that
  \begin{equation}
    \label{eq:eqq1}
    (p-k)(d+1)>dp    .
  \end{equation}
  Put $l=dp$. 
  \begin{claim}
    $$\frac{l+1}{n}<\frac{\varepsilon}{8}$$
  \end{claim}
  \begin{proof}
    Otherwise, we have $dp=l\geq n\,\varepsilon\slash
    8$ and since $k=k(\varepsilon\slash 4d)$, we have that
    \begin{equation}\label{eq:k}
      \frac{4d}{\varepsilon}\mu(X)\leq k
    \end{equation}
    and
    \begin{equation}\label{eq:epsilon}
      d+1\leq 4d=\frac{\varepsilon}{\mu(X)}(\frac{4d}{\varepsilon}\mu(X))\leq\frac{\varepsilon}{\mu(X)}k.
    \end{equation}
    By (\ref{eq:eqq0}) and the assumption that the Claim
    does not hold, we have
    \begin{equation*}
      \frac{n\varepsilon}{8}\leq dp\leq 2k(d+1)d< 2k(d+1)^2,\\
    \end{equation*}
    and by (\ref{eq:epsilon}) we get
    \begin{equation}
      \frac{n\varepsilon}{16}< k(d+1)^2\leq \frac{\varepsilon^2}{\mu(X)^2} k^3,
    \end{equation}
    which gives that
    $k=k^\varepsilon_\mu(d)>C^\mu_\varepsilon\sqrt[3]{n}$. Since
    $F^\varepsilon_\mu$ is increasing, (\ref{inverse})
    implies that $d\geq F^\varepsilon_\mu(C\sqrt[3]{n})$
    contrary to the assumption. This ends the proof of the Claim.  
  \end{proof}
  
  Write $l_n=k_n-l-1$. By Lemma \ref{simplicial}, we get a
  simplicial map
  $$\sd^{l_n}(K^{l+1,l+1}_p)\xrightarrow{\Z\slash p} K^{k_n,l+1}_p$$
  and precomposing it with $\sd(i)^{l_n}$ we get
  $$s:\sd^{l_n}(S^{l+1}_p)\xrightarrow{\Z\slash p}K^{k_n,l+1}_p.$$

  For each $f\in V^{k_n,l+1}_p$ write $\bar f\in(\Z\slash
  p)^{k_n}$ for the function which is equal to $f$ on its
  domain and $0$ elsewhere. Let $\bar
  c:V^{k_n,l+1}_p\rightarrow\{1,\ldots,d\}$ be defined so
  that $\bar c(f)=c(\bar f)$. Note that $\bar c$ induces a
  map $c':V^{k_n,l+1}_p\rightarrow \R^d$ so that
  $c'(f)=\mathrm{e}_{\bar c(f)}$, where
  $\{\mathrm{e}_1,\ldots,\mathrm{e}_d\}$ are the standard
  basis vectors in $\R^d$. This in turn, extends to a
  continuous
  map $$||c'||:||K^{k_n,l+1}_p||\rightarrow\R^d,$$ which is
  the affine extension of $c'$.

  Write $c''$ for $s\circ
  c':\sd^{l_n}(S^{l+1}_p)\rightarrow\R^d$ and let
  $$||c''||:||\sd^{l_n}(S^{l+1}_p)||\rightarrow\R^d$$ be its
  affine extension. Let also
  $||s||:||S^{l+1}_p||\rightarrow||K^{k_n,l+1}_p||$ be the
  affine extension of $s$. Note that
  $||c''||=||s||\circ||c'||$.

  Since $S^{l+1}_p$ is $l$-connected by Lemma
  \ref{connected}, so is $\sd^{l_n}(S^{l+1}_p)$ and hence
  its $\Z\slash p$-index (see \cite[Definition
  6.2.3]{Matousek} for the definition)
  satisfies $$\ind_{\Z\slash p}(\sd^{l_n}(S^{l+1}_p))\geq
  l+1$$ by \cite[Proposition 6.2.4]{Matousek}. Now,
  $l+1>d(p-1)$, so by the Borsuk--Ulam theorem \cite[Theorem
  6.3.3]{Matousek}, there is a point
  $x_0\in||\sd^{l_n}(S^{l+1}_p)||$ such that $||c''||$ is
  constant on the $\Z\slash p$-orbit of $x_0$. Since $||s||$
  preserves the action of $\Z\slash p$, we get that $||c'||$
  is constant on the $\Z\slash p$-orbit of $||s||(x_0)$. The
  point $||s||(x_0)$ lies in a maximal simplex in
  $K^{k_n,l+1}_p$, which is of the
  form $$h_{l+1}\subseteq\ldots\subseteq h_0.$$ Pick $i_0<d$
  such that the $i_0$-th coordinate of $||s||(x_0)$ is
  nonzero. As the function $||c'||$ is the affine extension
  of $c'$, there is $0\leq m\leq l+1$ such that $\bar
  c(h_m)=i_0$. Moreover, if $q\in\Z\slash p$ is any number,
  then $||s||(x_0)+q$ lies in the maximal
  simplex $$h_{l+1}+q\subseteq\ldots\subseteq h_0+q$$ and
  since still the $i_0$-th coordinate of $||s||(x_0)+q$ is
  nonzero, there is $0\leq m_q\leq l+1$ such that $\bar
  c(h_{m_q}+q)=i_0$.

  Put $h=h_{m_0}$. Consider the set $A_h=\{j\in\Z\slash p:$
  there is a component interval $J$ of $h$ such that $h$ is
  equal to $j$ on $J$ and $J$ intersects at least two of the
  intervals $I_1,\ldots, I_k\}$. Note that since the
  intervals $I_1,\ldots,I_k$ are disjoint and consequtive,
  we have $|A_h|\leq k-1$. Let
  $B_h=\{0,\ldots,p-1\}\setminus A_h$ and note that
  $|B_h|\geq p-k+1$. For each $j\in B_h$ the preimage of
  ${j}$ by $h$ is a disjoint union of component intervals,
  each of which is contained in at most one of the intervals
  $I_1,\ldots,I_k$. Note that (\ref{eq:eqq1}) implies that
  $$(p-k+1)(d+1)>l+1.$$ Since the number of the component
  intervals of $h$ is at most $l+1$, by the pigeonhole
  principle we get that one of the elements of $B_h$, say
  $q_0$, must have at most $d$ many component intervals in
  its preimage. Since each $I_1\ldots, I_k$ has submeasure
  at most $\varepsilon\slash 4d$, we get that

  \begin{equation}\label{preimage}
    \mu_n(h^{-1}(\{q_0\}))\leq
    d\cdot\frac{\varepsilon}{4d}=\frac{\varepsilon}{4}.
  \end{equation}

  Let $f=h_{m_{p-1-q_0}}+_p(p-1-q_0)$ and
  $g=h_{m_{p-q_0}}+_{p}(p-q_0)$.  Note that by the way we
  have chosen the numbers $m_q$, we have $c(\bar f)=c(\bar
  g)=i_0$. We will show that
  \begin{equation}\label{contradiction}
    \mu_n(\{i\leq k_n:\bar f(i)+1\not=\bar g(i)\})<\varepsilon,
  \end{equation}
  which implies that $\bar f$ and $\bar g$ are connected
  with an edge in $\Gamma^n_\varepsilon(\mu)$. This will
  give a contradiction and end the proof.

  Note that $h$ and $h_{m_{p-1-q_0}}$ as well as $h$ and
  $h_{m_{p-q_0}}$ differ at at most $l+1$ many points in
  $\{1,\ldots,k_n\}$. Since each point has submeasure
  $\mu_n$ at most $1\slash n$, by the Claim we get that
  \begin{align}
    \mu_n(\{i\leq k_n:h_{m_{p-1-q_0}}(i)\not=
    h(i)\})\leq\frac{\varepsilon}{8}\label{one}
    \\\mu_n(\{i\leq k_n:h_{m_{p-q_0}}(i)\not=
    h(i)\})\leq\frac{\varepsilon}{8}\label{two}
 \end{align}

 By (\ref{preimage}), (\ref{one}) and the Claim we have that
  \begin{align*}\label{arithmetic}
    &\mu_n(\{i\leq k_n:\bar f(i)+_p1\not= \bar
    f(i)+1\})=\mu_n(\bar
    f^{-1}(\{p-1\}))\tag{13}\\&\leq\mu_n(f^{-1}(\{p-1\}))+\varepsilon\slash
    8\leq\mu_n({h_{m_{p-1-q_0}}}^{-1}(\{q_0\}))+\varepsilon\slash
    8\\&\leq \mu_n(h^{-1}(\{q_0\}))+\varepsilon\slash
    8+\varepsilon\slash 8\leq \varepsilon\slash
    8+\varepsilon\slash 4.
  \end{align*}

  On the other hand, by (\ref{one}), (\ref{two}) and the Claim
  \begin{align*}
    &\mu_n(\{i\leq k_n:\bar f(i)+_p 1\not=
    h(i)+_p(p-q_0)\})\\&=\mu_n(\{i\leq k_n:\bar f(i)\not=
    h(i)+_p(p-1-q_0)\})\\&\leq\mu_n(\{i\leq k_n:f(i)\not=
    h(i)+_p(p-1-q_0)\})+\varepsilon\slash 8\\&=\mu_n(\{i\leq
    k_n:h_{m_{p-1-q_0}}(i)+_p(p-1-q_0)\not=
    h(i)+_p(p-1-q_0)\})\\&+\varepsilon\slash 8=\mu_n(\{i\leq
    k_n:h_{m_{p-1-q_0}}(i)\not= h(i)\})+\varepsilon\slash
    8\leq\varepsilon\slash 4
  \end{align*}
  and
  \begin{align*}
    &\mu_n(\{i\leq k_n:\bar g(i)\not=
    h(i)+_p(p-q_0)\})\\&\leq\mu_n(\{i\leq k_n:g(i)\not=
    h(i)+_p(p-q_0)\})+\varepsilon\slash 8\\&=\mu_n(\{i\leq
    k_n:h_{m_{p-q_0}}(i)+_p(p-q_0)\not=
    h(i)+_p(p-q_0)\})+\varepsilon\slash 8\\&=\mu_n(\{i\leq
    k_n:h_{m_{p-q_0}}(i)\not= h(i)\})+\varepsilon\slash
    8\leq\varepsilon\slash 4.
  \end{align*}
  So, together with (\ref{arithmetic}), this gives
  (\ref{contradiction}), as needed. This ends the proof.
 \end{proof}

\bibliographystyle{plain}
\bibliography{refs}

\end{document}